\documentclass[11pt]{article}
\usepackage{amsmath, amssymb, latexsym, amscd, amsthm, amsfonts, amstext}
\usepackage[mathscr]{eucal}
\usepackage[colorlinks]{hyperref}
\usepackage{xspace}
\usepackage{amscd}
\usepackage{amsmath}
\usepackage{amssymb}
\usepackage{latexsym}
\usepackage{graphics}
\usepackage{graphicx}
\usepackage{tikz}
\usepackage{xcolor}
\usepackage{float}
\usepackage{booktabs}

\newtheorem{theorem}{Theorem}[section]

\newtheorem{lemma}[theorem]{Lemma}
\newtheorem{remark}[theorem]{Remark}
\newtheorem{proposition}[theorem]{Proposition}

\newtheorem{example}[theorem]{Example}
\theoremstyle{definition}
\newtheorem{definition}[theorem]{Definition}
\begin{document}

\title{\textbf{Fuzzy subhyperspaces generated\\ by admissible mappings}}

\author {\bf O. R. Dehghan$^\dag$, R. Ameri$^\ddag$\\
        \small \em $^\dag$Department of Mathematics, Faculty of Basic Sciences,\\
        \small \em University of Bojnord, Bojnord, Iran, dehghan@ub.ac.ir\\
        \small \em $^\ddag$School of Mathematics, Statistics and Computer Sciences, College of Sciences,\\
        \small \em University of Tehran, P.O. Box 14155-6455, Tehran, Iran, rameri@ut.ac.ir}

\date{}
\maketitle
\begin{abstract}
Fuzzy hypervector spaces provide a structural framework for modeling graded uncertainty in algebraic systems with multivalued operations. In this paper, we introduce and investigate admissible mappings as a constructive mechanism for generating fuzzy subhyperspaces. These mappings represent fuzzy substructures through families of fuzzy points and allow a systematic characterization of generators. We prove that maximal admissible mappings generate fuzzy subhyperspaces in a manner analogous to bases in classical linear theory, and we present examples over real and finite hypervector spaces to clarify the role of maximality. The proposed approach contributes to the structural foundations of fuzzy modeling in hyperalgebraic settings and may support further developments in soft computing environments involving graded and non-deterministic structures.
\end{abstract}
\textbf{Mathematics Subject Classification 2010}: 20N20, 08A72\\
\textbf{Keywords}: Hypervector space, fuzzy subhyperspace, admissible mapping, generator.
\section{Introduction}
The theory of algebraic hyperstructures emerged as a natural generalization of classical algebraic structures through the introduction of hyperoperations, where the result of an operation is a set rather than a single element. This concept was first introduced by Marty~\cite{Marty} in 1934 and led to the systematic development of hypergroups, hyperrings, and related structures. Since then, the theory has been extensively developed, and its foundations and applications have been thoroughly studied in several monographs and survey works, notably those by Corsini-Leoreanu~\cite{Corsini Leoreanu}, Davvaz~\cite{Davvaz book 4}, and Davvaz–Leoreanu~\cite{Davvaz book 1}, which established hyperstructure theory as an important branch of modern algebra.

Among algebraic hyperstructures, hypervector spaces play a central role as multivalued generalizations of classical vector spaces. The notion of a hypervector space was introduced by Tallini~\cite{Tallini 1} in the early 1990s and subsequently investigated by several authors, including Ameri~\cite{Ameri Dehghan 1}, Raja~\cite{Raja}, Sedghi~\cite{Sedghi Dehghan Norouzi}, and the first author~\cite{Dehghan Soft HVS}. Fundamental linear concepts such as linear independence, spanning sets, bases, and dimension have been redefined and studied in this framework, showing that many classical linear phenomena persist, albeit in a more subtle form, under multivalued scalar multiplication.

The concept of a fuzzy subset was introduced by Zadeh~\cite{Zadeh} in 1965 as a powerful tool for modeling uncertainty and vagueness in mathematical systems. In contrast to classical subsets, elements of a fuzzy set belong to the set with varying degrees of membership. This idea has had a profound impact on both pure and applied mathematics and has provided a flexible framework for extending classical theories to situations involving incomplete or imprecise information.

Following Zadeh’s pioneering work, Rosenfeld~\cite{Rosenfeld} extended fuzzy set theory to group theory, thereby initiating the study of fuzzy algebraic structures. Since then, fuzzy groups, fuzzy rings, and fuzzy vector spaces have been widely investigated by many researchers, including Katsaras~\cite{Katsaras}, Nanda~\cite{Nanda}, Malik~\cite{Malik Mordeson}, and Kumar~\cite{Kumar}, while more recent contributions have addressed structural characterizations via fuzzy substructures \cite{Mahboob} and the development of fuzzy inner product spaces within linear frameworks \cite{Xiao}. These studies demonstrated that combining algebraic structures with fuzziness yields rich mathematical theories capable of capturing uncertainty within algebraic operations and relations.

Given the success of fuzzy algebra in classical settings, it was natural to extend fuzzy concepts to algebraic hyperstructures. Fuzzy algebraic hyperstructures incorporate both multivalued operations and graded membership, thus simultaneously addressing uncertainty and non-determinism. In recent years, this area has attracted increasing attention, and various properties of fuzzy hyperstructures, their substructures, and related algebraic behaviors have been systematically developed in the literature (see, for instance, \cite{Davvaz book 3}). More recently, fuzzy and soft extensions of algebraic hyperstructures have also been investigated, including studies on soft hyperrings and related regularity notions \cite{Ostadhadi}, as well as state-theoretic investigations in bounded hyper EQ-algebras \cite{Xin}.

Fuzzy hypervector spaces, or equivalently fuzzy subhyperspaces, were first introduced by Ameri~\cite{Ameri FHVS over VF} and later further developed by himself~\cite{Ameri Dehghan 3, Ameri Dehghan 4} and the first author through a series of works \cite{Dehghan Balanced Absorbing, Dehghan Bipolar3, Dehghan Bipolar2}. These structures generalize fuzzy vector spaces by allowing scalar multiplication to be multivalued. The study of fuzzy hypervector spaces raises new challenges, particularly with respect to linear independence, bases, generators, and dimension, which require approaches different from those used in classical fuzzy vector spaces.

The main purpose of this paper is to investigate fuzzy subhyperspaces through the notion of admissible mappings. After recalling the necessary preliminaries, we introduce admissible mappings associated with a fuzzy subhyperspace and study their fundamental properties. We show that maximal admissible mappings play a role analogous to bases in classical linear theory, in the sense that they generate the underlying fuzzy subhyperspace. Several illustrative examples over real and finite hypervector spaces are provided to clarify the theory and highlight the importance of maximality. The results presented here offer an effective algebraic framework for constructing and analyzing generators of fuzzy subhyperspaces.

Beyond its purely algebraic interest, the study of fuzzy subhyperspaces is relevant to the structural modeling of graded and non-deterministic systems, which frequently arise in soft computing contexts. Hyperoperations naturally capture non-uniqueness, while fuzzy membership encodes graded uncertainty. The admissible mapping framework introduced in this paper offers a constructive representation mechanism for such structures, thereby enriching the theoretical foundations underlying fuzzy modeling in multivalued environments.
\section{Preliminaries}
In this section, we recall some basic definitions and properties of hypervector spaces that will be used throughout the paper.
\begin{definition}\label{D HVS}\cite{Tallini 1}
Let $K$ be a field, $(V,+)$ an Abelian group, and $P_{\ast}(V)$ the set of all nonempty subsets of $V$. A \emph{hypervector space} over $K$ is a quadruple $(V,+,\circ,K)$, where ``$\circ$'' is an external hyperoperation
\begin{equation*}
\circ : K\times V \longrightarrow P_{\ast }(V),
\end{equation*}
such that for all $a,b\in K$ and $x,y\in V$, the following conditions hold:
\begin{enumerate}
\item[1)] $a\circ (x+y)\subseteq a\circ x+a\circ y$, \hfill (right distributive law),
\item[2)] $(a+b)\circ x\subseteq a\circ x+b\circ x$, \hfill (left distributive law),
\item[3)] $a\circ (b\circ x)=(ab)\circ x$,
\item[4)] $a\circ (-x)=(-a)\circ x=-(a\circ x)$,
\item[5)] $x\in 1\circ x$,
\end{enumerate}
where in (1), $a\circ x+a\circ y=\{p+q : p\in a\circ x,\ q\in a\circ y\}$, and similarly for (2). Moreover, in (3), $a\circ (b\circ x)=\bigcup_{t\in b\circ x}a\circ t$.

A hypervector space $V$ is called \emph{strongly right distributive} if equality holds in (1). Similarly, one defines \emph{strongly left distributive} hypervector spaces, and $V$ is said to be \emph{strongly distributive} if it is both strongly right and left distributive.

We say that a hypervector space $V$ is invertible if $u\in a\circ v$ implies $v\in a^{-1}\circ u$.

A nonempty subset $W$ of $V$ is called a \emph{subhyperspace} of $V$, denoted by $W\leqslant V$, if $W$ is itself a hypervector space under the induced external hyperoperation of $V$, that is, for all $a\in K$ and $x,y\in W$, one has $x-y\in W$ and $a\circ x\subseteq W$.
\end{definition}
\begin{example}\cite{Ameri Dehghan 3}\label{example hvs R3}
In classical vector space $(\mathbb{R}^{3},+,.,\mathbb{R})$ we define the external hyperoperation $\circ: \mathbb{R}\times \mathbb{R}^{3} \rightarrow P_{\ast}(\mathbb{R}^{3})$ by
\[a\circ(x_1,x_2,x_3)=\{(ax_1,ax_2,t): t\in\mathbb{R}\},\]
so $a\circ(x_1,x_2,x_3)$ is the entire line parallel to the $z$-axis through $(ax_1,ax_2,0)$, i.e. $a\circ(x_1,x_2,x_3)=l$, where $``l"$ has the parametric equations:
\begin{equation*}
l:\left\{
\begin{array}{l}
x=ax_1, \\
y=ax_2, \\
z=t.
\end{array}
\right.
\end{equation*}
Then $V=(\mathbb{R}^{3},+,\circ,\mathbb{R})$ is a strongly right distributive hypervector space over the field $\mathbb{R}$.
\end{example}
In the sequel, $V$ always denotes a hypervector space over a field $K$, unless stated otherwise. The zero element of $V$ is denoted by $\underline{0}$.
\begin{definition}\label{D lin indep}\cite{Ameri Dehghan 1}
A subset $S$ of $V$ is said to be \emph{linearly independent} if for every finite collection of vectors $v_{1},\ldots ,v_{n}\in S$ and scalars $c_{1},\ldots ,c_{n}\in K$, the relation $\underline{0}\in c_{1}\circ v_{1}+\cdots +c_{n}\circ v_{n}$ implies $c_{1}=\cdots =c_{n}=0$. Otherwise, $S$ is said to be \emph{linearly dependent}.

A \emph{basis} of $V$ is a linearly independent subset $S$ of $V$ that spans $V$, i.e., $V=\langle S\rangle$, where
\begin{eqnarray*}
\langle S\rangle &=& \left\{ t\in V : t\in \sum_{i=1}^{n} a_{i}\circ s_{i},\ a_{i}\in K,\ s_{i}\in S,\ n\in \mathbb{N} \right\} \\
&=& \left\{ t_{1}+t_{2}+\cdots +t_{n} : t_{i}\in a_{i}\circ s_{i},\ a_{i}\in K,\ s_{i}\in S,\ n\in \mathbb{N} \right\}.
\end{eqnarray*}
\end{definition}
\begin{definition}\label{D FHVS}\cite{Ameri FHVS over VF}
A fuzzy subset $\mu$ of $V$ is called a \emph{fuzzy subhyperspace} of $V$, if for all $a\in K$ and $x,y\in V$, the following conditions are satisfied:
\begin{enumerate}
  \item $\mu (x+y)\geq \mu (x)\wedge \mu (y)$,
  \item $\mu (-x)\geq \mu (x)$,
  \item $\underset{t\in a\circ x}{\bigwedge}\mu (t)\geq\mu(x)$.
\end{enumerate}
\end{definition}
\begin{example}\label{example fhvs R3}\cite{Dehghan Ameri New Char FSHS Fuz Point}
Consider the hypervector space $V=(\mathbb{R}^{3},+,\circ,\mathbb{R})$ in Example \ref{example hvs R3}. Then the following fuzzy subset $\mu$ on $V$ is a fuzzy subhyperspace of $V$:
\[
\mu (x_1,x_2,x_3)=\left\{
\begin{array}{cc}
1 & x_2=0, \\
\frac{1}{3} & x_2\neq 0.
\end{array}
\right.
\]
\end{example}
\begin{example}\label{example fhvs Z3^2}\cite{Dehghan Ameri New Char FSHS Fuz Point}
Let $(\mathbb{Z}_3\times \mathbb{Z}_3,+)$ with the usual addition. Put $A=\{(0,0),(1,1),(2,2)\}$. Define the external hyperoperation $\circ:\mathbb{Z}_3\times \mathbb{Z}_3^2\to P_*(\mathbb{Z}_3^2)$ by
\[
a\circ(x,y)=\{(ax,ay)+t:\ t\in A\}.
\]
Then $V=(\mathbb{Z}_3\times \mathbb{Z}_3,+,\circ,\mathbb{Z}_3)$ is a hypervector space. Define a fuzzy subset $\mu$ on $V$ by
\[
\mu(x,y)=
\begin{cases}
1, & (x,y)\in A,\\
\frac{2}{3}, & (x,y)\in V\setminus A.
\end{cases}
\]
Then $\mu$ is a fuzzy subhyperspace of $V$.
\end{example}
\begin{definition}\label{D sum} \cite{Ameri Dehghan 4}
Let $\mu$ and $\nu$ be fuzzy subsets of $V$ and $a\in K$. We define $\mu+\nu$ and $a \circ \mu$ to be the fuzzy subsets of $V$, whose membership functions are given by:
\begin{equation*}
(\mu+\nu)(x)=\bigvee \{\mu(y)\wedge\nu(z):\ y,z\in V,x=y+z\},
\end{equation*}
and
\begin{equation*}
(a\circ \mu )(x)=\left\{
\begin{array}{cl}
\underset{x\in a\circ t}{\bigvee }\mu (t) & \exists t\in V;x\in a\circ t, \\
0 & otherwise.
\end{array}
\right.
\end{equation*}
\end{definition}
For a set $X$, let $A\subseteq X$ and $0\leq\alpha\leq 1$. The fuzzy subset $A_\alpha$ of $X$ is defined by
\[
A_{\alpha }(x)=\left\{
\begin{array}{cc}
\alpha  & x\in A, \\
0 & x\notin A.
\end{array}
\right.
\]
Similarly, for any $x\in X$,
\[
x_{\alpha }(y)=\left\{
\begin{array}{cc}
\alpha  & x=y, \\
0 & x\neq y.
\end{array}
\right.
\]
The fuzzy subset $x_\alpha$ is called a \emph{fuzzy point} of $X$ with support $x$ and value $\alpha$.

If $\mu$ is a fuzzy subset of $X$ (denoted by $\mu\in FS(X)$), we write $A_\alpha\in\mu$ instead of $\underline{\mu}(A)=\bigwedge_{x\in A}\mu(x)\geq\alpha$. Moreover, $x_\alpha\in\mu$ means that $\mu(x)\geq\alpha$. The set of all fuzzy points in $X$ is denoted by $X_P$, i.e., $X_P=\{x_\alpha:x\in X,\ \alpha\in [0,1]\}$. For a subset $S\subseteq X_P$, the \emph{foot} of $S$ is defined by $\text{foot}(S)=\{x:x_\alpha\in S\}$.
\begin{lemma}\label{L1}\cite{Dehghan Ameri New Char FSHS Fuz Point}
If $A,B\subseteq V$, $x\in V$, and $0\leq\alpha,\beta\leq 1$, then:
\begin{enumerate}
\item $A_\alpha+B_\beta=(A+B)_{\alpha\wedge\beta}$.
\item $a\circ x_{\alpha}=(a\circ x)_{\alpha}$.
\end{enumerate}
\end{lemma}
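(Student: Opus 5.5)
Both parts are direct computations from Definition \ref{D sum}: I will evaluate each fuzzy subset pointwise at an arbitrary $x\in V$ and compare with the indicator-type subsets $(A+B)_{\alpha\wedge\beta}$ and $(a\circ x)_\alpha$.

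For part (1), fix $x\in V$. By Definition \ref{D sum},
\[
(A_\alpha+B_\beta)(x)=\bigvee\{A_\alpha(y)\wedge B_\beta(z): y,z\in V,\ x=y+z\}.
\]
The only nonzero value $A_\alpha(y)\wedge B_\beta(z)$ can take is $\alpha\wedge\beta$, and it takes it exactly when $y\in A$ and $z\in B$. This gives two cases.
\begin{itemize}
\item If $x\in A+B$, some decomposition $x=y+z$ has $y\in A$, $z\in B$, so the supremum equals $\alpha\wedge\beta$.
\item If $x\notin A+B$, every decomposition has $y\notin A$ or $z\notin B$, so every term is $0$ and the supremum is $0$.
\end{itemize}
These values are exactly $(A+B)_{\alpha\wedge\beta}(x)$. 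Every $x$ admits at least one decomposition, for example $x=x+\underline{0}$, so the supremum is never over an empty set. The edge case $\alpha\wedge\beta=0$ is trivially consistent with both cases.

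For part (2), fix $y\in V$ and use the second formula in Definition \ref{D sum}. If no $t$ satisfies $y\in a\circ t$, then $(a\circ x_\alpha)(y)=0$. In that case $y\notin a\circ x$, since otherwise $t=x$ would witness it, so $(a\circ x)_\alpha(y)=0$ as well.

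Otherwise $(a\circ x_\alpha)(y)=\bigvee_{y\in a\circ t}x_\alpha(t)$. Here $x_\alpha(t)$ is $\alpha$ when $t=x$ and $0$ otherwise, which gives two subcases.
\begin{itemize}
\item If $y\in a\circ x$, the index $t=x$ occurs and the supremum is $\alpha$.
\item If $y\notin a\circ x$, the index $t=x$ never occurs, all terms vanish, and the supremum is $0$.
\end{itemize}
In both subcases this matches $(a\circ x)_\alpha(y)$.

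I expect no real obstacle. The only point needing care is the bookkeeping of empty suprema: in part (1) the index set is never empty, and in part (2) the definition handles the empty case explicitly. No axioms of the hypervector space are needed beyond the definitions.
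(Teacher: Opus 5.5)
Your proposal is correct: both identities follow from the pointwise evaluation of Definition~\ref{D sum}, with the case splits ($x\in A+B$ or not in part (1), $y\in a\circ x$ or not in part (2)) and the empty-index case handled properly. The paper gives no proof of this lemma and only cites it from earlier work, so there is no different route to compare, and your direct computation is the standard verification.
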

\section{Generated fuzzy subhyperspaces}\label{Sec Generated fuzzy subhyperspaces}
For a fuzzy subset $\mu$ of $V$, the intersection of all fuzzy subhyperspaces of $V$ containing $\mu$ is called the \emph{subhyperspace generated by $\mu$}, denoted by $\langle\mu\rangle$. The following theorem provides a characterization of generated subhyperspaces in terms of fuzzy points.
\begin{theorem}\label{thm generated fuzzy subhyperspace}
If $\mu\in FS(V)$, then
\[\langle\mu\rangle=\bigcup\limits_{n\in\mathbb{N}}\left\{\sum\limits_{i=1}^{n}a_{i}\circ x_{i_{\mu(x_i)}}:x_{j}\in V,a_{j}\in K,j=1,\ldots,n\right\},\]
which, by Lemma \ref{L1}, leads to
\[\langle\mu\rangle(x)=\bigvee\limits_{n\in\mathbb{N}}\left\{\mu(x_{1})\wedge\cdots\wedge\mu(x_{n}):x_{j}\in V, x\in\sum a_{j}\circ x_{j},a_{j}\in K,j=1,\ldots,n\right\}.\]
\end{theorem}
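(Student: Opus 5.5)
Let $\nu$ denote the fuzzy subset defined by the right-hand side of the second formula,
\[\nu(x)=\bigvee\Bigl\{\mu(x_{1})\wedge\cdots\wedge\mu(x_{n}): n\in\mathbb{N},\ x_j\in V,\ a_j\in K,\ x\in\textstyle\sum a_j\circ x_j\Bigr\}.\]
I interpret the first formula as the union of the fuzzy subsets $\sum a_i\circ x_{i_{\mu(x_i)}}$. By Lemma \ref{L1}, each such sum equals $\bigl(\sum a_i\circ x_i\bigr)_{\mu(x_1)\wedge\cdots\wedge\mu(x_n)}$. Taking the pointwise supremum of these over all choices gives exactly $\nu$, so the two displayed formulas agree. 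It therefore suffices to prove $\langle\mu\rangle=\nu$. I will do this in three steps: (i) $\mu\subseteq\nu$; (ii) $\nu$ is a fuzzy subhyperspace; (iii) $\nu\subseteq\lambda$ for every fuzzy subhyperspace $\lambda\supseteq\mu$. Steps (i) and (ii) give $\langle\mu\rangle\subseteq\nu$, and step (iii) gives the reverse inclusion.

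For (i), take $n=1$, $a_1=1$ and $x_1=x$. Axiom (5) gives $x\in 1\circ x$, so $\nu(x)\ge\mu(x)$.

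For (ii), I check the three conditions of Definition \ref{D FHVS}.
\begin{itemize}
\item Sums: if $x\in\sum_{i\le n}a_i\circ x_i$ and $y\in\sum_{j\le m}b_j\circ y_j$, then $x+y$ lies in the concatenated sum of $n+m$ terms. Its value is $\bigwedge\mu(x_i)\wedge\bigwedge\mu(y_j)$. Taking suprema gives $\nu(x+y)\ge\nu(x)\wedge\nu(y)$, since $\wedge$ distributes over $\bigvee$ in $[0,1]$.
\item Negatives: if $x\in\sum a_i\circ x_i$, then $-x\in\sum -(a_i\circ x_i)=\sum(-a_i)\circ x_i$ by axiom (4), with the same generators $x_i$. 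Hence $\nu(-x)\ge\nu(x)$.
\item Scalar multiples: let $t\in a\circ x$ with $x\in\sum a_i\circ x_i$. First I show $a\circ(u_1+\cdots+u_n)\subseteq\sum a\circ u_i$ by induction on $n$ using axiom (1). Then, with $u_i\in a_i\circ x_i$, we get $t\in\sum a\circ u_i\subseteq\sum a\circ(a_i\circ x_i)=\sum(aa_i)\circ x_i$ by axiom (3). The generators are the same $x_i$, so $\nu(t)\ge\mu(x_1)\wedge\cdots\wedge\mu(x_n)$. Taking the infimum over $t$ and then the supremum over representations of $x$ gives $\bigwedge_{t\in a\circ x}\nu(t)\ge\nu(x)$.
\end{itemize}

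For (iii), let $\lambda$ be a fuzzy subhyperspace with $\lambda\ge\mu$, and suppose $x=t_1+\cdots+t_n$ with $t_i\in a_i\circ x_i$. Condition 3 of Definition \ref{D FHVS} gives $\lambda(t_i)\ge\lambda(x_i)\ge\mu(x_i)$. Repeated use of condition 1 then gives $\lambda(x)\ge\bigwedge_i\mu(x_i)$. Taking the supremum over all representations yields $\lambda\ge\nu$. Intersecting over all such $\lambda$ gives $\nu\le\langle\mu\rangle$.

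The main obstacle is the scalar-multiplication condition in step (ii). Only the inclusion in axiom (1) is available, so the induction must push $a\circ$ inside the sum in the correct direction. One must also make sure each resulting term is taken over the original generators $x_i$, so that the value $\bigwedge\mu(x_i)$ is preserved. Everything else is bookkeeping with suprema, plus the routine Lemma \ref{L1} identification of the two formulas.
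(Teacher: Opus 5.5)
Your proposal is correct and follows the paper's approach: the paper's proof just asserts that the displayed union is the smallest fuzzy subhyperspace containing $\mu$. Your steps (i)--(iii), together with the Lemma~\ref{L1} identification of the two formulas, carry out exactly the verification the paper leaves as ``straightforward'', including the scalar condition, which you correctly handle by iterating axiom (1), then applying axiom (3) while keeping the original generators $x_i$.
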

\begin{proof}
It is straightforward to verify that
\[A=\bigcup_{n\in\mathbb{N}}\left\{\sum_{i=1}^{n}a_{i}\circ x_{i_{\mu(x_i)}}:x_{j}\in V,\ a_{j}\in K,\ j=1,\ldots,n\right\},\]
is the smallest fuzzy subhyperspace of $V$ containing $\mu$.
\end{proof}
\begin{example}
Let $V=(\mathbb{R}^3,+,\circ,\mathbb{R})$ be the hypervector space introduced in Example \ref{example hvs R3}, and let $\mu$ be a fuzzy subset of $V$ defined by
\[
\mu(x,y,z)=
\begin{cases}
1, & y=0,\\
\frac{2}{3}, & y\neq0,\ |y|\le1,\\
\frac{1}{3}, & |y|>1.
\end{cases}
\]
We compute the generated fuzzy subhyperspace $\langle\mu\rangle$.

\medskip
\noindent\textbf{(I) Case $y=0$.} Fix $v=(x,0,z)\in V$. For any integer $n\ge1$, consider the vectors
\[
v_i=\Big(\frac{x}{n},0,0\Big)\in V,\qquad i=1,\dots,n.
\]
For each $i$ we have $\mu(v_i)=1$, because the second coordinate is $0$. Since
\[
1\circ\Big(\frac{x}{n},0,0\Big)=\Big\{\Big(\frac{x}{n},0,t\Big):t\in\mathbb{R}\Big\},
\]
we obtain
\[(x,0,z)\in \sum_{i=1}^n 1\circ\Big(\frac{x}{n},0,0\Big)=\Big\{(x,0,t_1+\cdots+t_n): t_i\in\mathbb{R} \Big\}.\] Thus for every $n$, $\bigwedge_{i=1}^n\mu(v_i)=1$, and taking supremum over $n$ yields $\langle\mu \rangle (x,0,z) \ge 1$, and so $\langle\mu\rangle(x,0,z)=1$.

\medskip\noindent\textbf{(II) Case $y\neq 0$.} Fix $v=(x,y,z)\in V$, with $y\neq0$. We first produce a lower bound for $\langle\mu\rangle(x,y,z)$. Choose an integer $n\ge\lceil |y|\rceil$, so that $|y/n|\le1$. Define
\[
v_i=\Big(\frac{x}{n},\frac{y}{n},0\Big)\in V,\qquad i=1,\dots,n.
\]
By construction $|y/n|\le1$ and $y/n\neq0$, hence each $\mu(v_i)=\tfrac{2}{3}$. Using $a_i=1$, for all $i$, and appropriate parameters in the third coordinate, we have $(x,y,z)\in \sum_{i=1}^n 1\circ v_i$, because each $1\circ v_i=\{(x/n,y/n,t):t\in\mathbb{R}\}$ and summing yields $(x,y,z)$ for suitable choices of the third-coordinate parameters. Consequently,
\[
\langle\mu\rangle(x,y,z)\ge \bigwedge_{i=1}^n\mu(v_i)=\frac{2}{3}.
\]
Next we show that no representation can raise the membership strictly above $2/3$. Assume, toward a contradiction, that there exists a representation $(x,y,z)\in \sum_{i=1}^m a_i\circ w_i$, with $w_i\in V$, $a_i\in\mathbb{R}$, such that $\bigwedge_{i=1}^m\mu(w_i)>\frac{2}{3}$. Because the only possible membership value strictly larger than $2/3$ is $1$, this forces $\mu(w_i)=1$, for every $i$. But vectors with $\mu=1$ have second coordinate $0$, hence each $w_i$ has $y$–component $0$. Any finite sum of such contributions necessarily has second coordinate $0$, contradicting $y\neq0$. Therefore, no such representation exists, and we must have
\[
\langle\mu\rangle(x,y,z)\le \frac{2}{3}.
\]
Combining the two inequalities yields $\langle\mu\rangle(x,y,z)=\frac{2}{3}$.

\noindent Collecting the two cases, the generated fuzzy subhyperspace is
\[
\langle\mu\rangle(x,y,z)=
\begin{cases}
1, & y=0,\\
\frac{2}{3}, & y\neq0.
\end{cases}
\]
In particular, $\langle\mu\rangle\ne\mu$ (points with $|y|>1$ have $\mu=\tfrac{1}{3}$, but $\langle\mu\rangle= \tfrac{2}{3}$).
\end{example}
\begin{example}
Let $V=\mathbb{Z}_3\times\mathbb{Z}_3$ be the hypervector space defined in Example \ref{example fhvs Z3^2}, and consider the fuzzy subset $\mu:V\to[0,1]$ defined by
\[
\mu(x,y)=
\begin{cases}
1, & (x,y)=(0,0),\\
\frac{3}{4}, & (x,y)\in\{(1,0),(2,1),(0,2)\},\\
\frac{1}{4}, & \text{otherwise}.
\end{cases}
\]
$\mu$ is not a fuzzy subhyperspace of $V$. For example, take $(x,y)=(0,0)$ and $a=1$. Since $1\circ(0,0)=A$, so
\[
\bigwedge_{u\in 1\circ(0,0)}\mu(u)
=\min\Big\{\,1,\tfrac14,\tfrac14\,\Big\}
=\tfrac14<\mu(0,0)=1,
\]
violating condition $(3)$ of Definition \ref{D FHVS}. Now we compute $\langle\mu\rangle$:

\medskip
\noindent Take an arbitrary $u\in A=\{(0,0),(1,1),(2,2)\}$. Since $1\circ(0,0)=\{(0,0),(1,1),(2,2)\}=A$, we have $u\in 1\circ(0,0)$. Thus the representation with $n=1$, $a_1=1$ and $v_1=(0,0)$ yields $\bigwedge_{i=1}^1\mu(v_i)= \mu(0,0)=1$, hence $\langle\mu\rangle(u)\ge 1$, and so $\langle\mu\rangle(u)=1$, $\forall u\in A$.

\medskip
\noindent Fix $b\in B=\{(1,0),(2,1),(0,2)\}$. By definition \(\mu(b)=\tfrac{3}{4}\), so certainly $\langle\mu\rangle(b)\ge \tfrac{3}{4}$ (since we may take the representation $b\in 1\circ b$, giving $\bigwedge \mu(b)=\tfrac {3}{4}$). We must show no representation can give a larger value (i.e. strictly greater than \(3/4\)). The only larger value available in $\mu$ is $1$, attained only at $(0,0)$. Thus if some representation of $b$ produced a strictly larger minimum, all generators $v_i$ in that representation would need $\mu(v_i)=1$, hence each $v_i=(0,0)$. But any finite sum of sets $a_i\circ(0,0)$ equals a subset of $A$ (indeed $0\circ(0,0)=A$ and $1\circ(0,0)=A$, etc.), and so cannot contain $b\in B$. Thus no representation of \(b\) yields a value greater than \(3/4\), and therefore $\langle\mu\rangle(b)=\tfrac{3}{4}$, $\forall b\in B$.

\medskip
\noindent Consider $C=\{(2,0),(0,1),(1,2)\}$ and $c=(2,0)\in C$. Note that $(2,0)=(1,0)+(1,0)\subseteq 1\circ(1,0) + 1\circ(1,0)$. Taking the two generators $v_1=v_2= (1,0)$ (each with $\mu(1,0)=\tfrac{3}{4}$) gives $\bigwedge_{i=1}^2\mu(v_i)=\tfrac{3}{4}$, so $\langle\mu\rangle (2,0) \ge\tfrac{3}{4}$. By symmetry the same argument applies to the other elements of $C$ (e.g. $(0,1)=(0,2)+ (0,2)$ etc.), hence $\langle\mu\rangle(c)\ge \tfrac{3}{4}$, $\forall c\in C$. To see that no larger value (namely $1$) can occur at $c\in C$, observe again that $1$ is attained only at $(0,0)$ and any sum of sets $a_i\circ (0,0)$ is contained in $A$, so cannot produce $c\in C$. Thus $\langle\mu\rangle(c)=\tfrac{3}{4}$, $\forall c\in C$.

\medskip
Collecting the three cases we obtain,
\[
\langle\mu\rangle(x,y)=
\begin{cases}
1, & (x,y)\in A,\\
\dfrac{3}{4}, & (x,y)\in B\cup C.
\end{cases}
\]
Now we verify the properties of Definition \ref{D FHVS}, for the function $\nu=\langle\mu\rangle$. For all $u,v\in V$, it is straightforward to check that $\nu(u+v)\ge \nu(u)\wedge\nu(v)$ and $\nu(-u)=\nu(u)$. Also, for every $a\in\mathbb{Z}_3$, if $a=0$, then $0\circ u=\{(0,0),(1,1),(2,2)\}=A$, so $\bigwedge_{t\in 0\circ u}\nu(t) =1\ge \nu(u)$. If $a=1$, then \(1\circ u=u+\{(0,0),(1,1),(2,2)\}\) is containing $u$. Thus \(\bigwedge_{t\in 1\circ u}\nu(t)=\nu(u)\), giving equality. If $a=2$, the same reasoning yields equality. Therefore, $\langle \mu \rangle$ is a fuzzy subhyperspace of $V$, such that $\langle\mu\rangle\ne\mu$.
\end{example}
\section{Admissible mappings}\label{Sec Admissible Mappings}
In this section, we introduce the notion of admissible mappings associated with a fuzzy subhyperspace. These mappings are designed to capture, in terms of fuzzy points, the essential algebraic features required to reconstruct the fuzzy subhyperspace. We also present several examples illustrating both admissible and non-admissible cases.

Note that the following definition is motivated by the classical notion of a basis: the conditions are designed to ensure linear independence of nonzero fuzzy points and sufficient representability of vectors with respect to their membership levels.
\begin{definition}\label{D admis}
Let $\mu$ be a fuzzy subhyperspace of $V$. A mapping $f:[0,1]\rightarrow P(V_P)$ is said to be \emph{admissible for $\mu$} if the following conditions hold:
\begin{enumerate}
  \item $\underline{0}_{\mu(\underline{0})}\in f(\mu(\underline{0}))$;
  \item if $x_\alpha\in f(\beta)$, then $\alpha=\mu(x)=\beta$;
  \item the set $foot\!\left(\left\{x_{\alpha}:x\neq\underline{0},x_{\alpha}\in\bigcup_{\beta\in I}f(\beta) \right\}\right)$ is linearly independent in $V$;
  \item if $x\in \sum\limits_{i=1}^{n}a_{i}\circ x_{i}$, for some $a_{i}\in K$ and $x_{i}\in foot\!\left( \bigcup_{\beta \neq 0}f(\beta )\right)$, then $x\in\sum\limits_{i=1}^{m}b_{i}\circ y_{i}$, for some $b_{i}\in K$ and $y_{i}\in foot\!\left( \bigcup_{\beta \geq \mu (x)}f(\beta )\right)$.
\end{enumerate}
\end{definition}
\begin{example}\label{example admis R3}
Consider the fuzzy subhyperspace $\mu$ in Example \ref{example fhvs R3}. Define $f:[0,1]\rightarrow P(V_{P})$ by
\[
f(\beta )=\left\{
\begin{array}{ll}
\left\{ (0,0,0)_{1},(1,0,1)_{1}\right\},  & \beta =1, \\
\varnothing,  & \beta \neq 1.
\end{array}
\right.
\]
Since $\mu(\underline{0})=\mu(0,0,0)=1$ and $f(\mu(\underline{0}))=f(1)=\{(0,0,0)_1,(1,0,1)_1\}$, we have $\underline{0}_{\mu(\underline{0})}=(0,0,0)_1\in f(\mu(\underline{0}))$. Each $x_\alpha\in f(\beta)$ satisfies $\alpha=\beta=\mu(x)$. Moreover, $foot(\bigcup_\beta f(\beta)\setminus\{\underline{0}\}) =\{(1,0,1)\}$ is linearly independent, because $(0,0,0)\in a\circ (1,0,1)$ implies $a=0$. The closure condition in Definition \ref{D admis}(4) is satisfied, since all representable vectors use points at level $1$. Hence $f$ is an admissible mapping for $\mu$.
\end{example}
\begin{example}\label{example admis R3 2}
Consider the fuzzy subhyperspace $\mu$ in Example \ref{example fhvs R3}. Let $e_1=(1,0,0)$ and $e_2=(0,1,0)$. Define $f':[0,1]\to P(V_P)$ by
\[
f'(\beta)=
\begin{cases}
\{\underline{0}_{1}, e_{1_{1}}\}, & \beta=1,\\
\{e_{2_{1/3}}\}, & \beta=\tfrac{1}{3},\\
\varnothing, & \text{otherwise.}
\end{cases}
\]
Clearly, $\underline{0}_{\mu(\underline{0})}=\underline{0}_1\in \{\underline{0}_{1}, e_{1_{1}}\}= f'(\mu (\underline{0}))$, and every fuzzy point $x_\alpha\in f'(\beta)$ satisfies $\alpha=\mu(x)=\beta$. Also, the nonzero feet are $\{e_1,e_2\}$, and so for arbitrary scalars $c_1,c_2\in\mathbb{R}$,
\[c_1\circ e_1 + c_2\circ e_2= \{(c_1,c_2,t):t\in\mathbb{R}\}.\]
Thus $\underline{0}\in c_1\circ e_1 + c_2\circ e_2$, implies $c_1=0$ and $c_2=0$. Hence $\{e_1,e_2\}$ is linearly independent. Finally, let $v\in V$ and suppose
\[
v\in \sum_{i=1}^n a_i\circ x_i, \quad \text{with } x_i\in foot\bigl(\bigcup_{\beta\in(0,1]} f'(\beta)\bigr) =\{e_1,e_2\}.
\]
Write $v=(v_1,v_2,v_3)$ and note that any such sum has the form $(\sum a_i x_{i1},\sum a_i x_{i2},t)$, where $x_i =(x_{i1},x_{i2},x_{i3})$, and $t\in \mathbb{R}$. If $\mu(v)=1$ (i.e. $v_2=0$), then necessarily $\sum a_i x_{i2} =0$, so $v$ can be represented using only copies of $e_1$ (which lie in $foot(f'(1))$), hence $v\in\langle foot (\bigcup_{\beta\ge\mu(v)} f'(\beta))\rangle$. If $\mu(v)=\tfrac{1}{3}$ (i.e. $v_2\neq0$), then the generators needed can be chosen from $\{e_1,e_2\}$, which are precisely in $foot(\bigcup_{\beta\ge 1/3} f'(\beta))$. Thus condition (4) of Definition \ref{D admis}, is satisfied. Therefore, $f'$ is an admissible mapping for $\mu$.
\end{example}
\begin{example}\label{example fhvs Z3^3}
Let $V=\mathbb{Z}_3\times\mathbb{Z}_3\times\mathbb{Z}_3$ with componentwise addition. Define the external hyperoperation $\circ:\mathbb{Z}_3\times V\longrightarrow P_\ast(V)$ by
\[
a\circ (x_1,x_2,x_3)=\{(a x_1,\; a x_2,\; t):\ t\in\mathbb{Z}_3\},
\]
for every $a\in\mathbb{Z}_3$ and $(x_1,x_2,x_3)\in V$. Then $V$ is a strongly distributive hypervector space over \(\mathbb{Z}_3\). We check the axioms of Definition \ref{D HVS}.

\noindent 1) For $a\in\mathbb{Z}_3$ and $x=(x_1,x_2,x_3),\ y=(y_1,y_2,y_3)\in V$,
\[
a\circ(x+y)=\{(a(x_1+y_1),\; a(x_2+y_2),\; t):t\in\mathbb{Z}_3\}.
\]
On the other hand,
\begin{eqnarray*}
a\circ x + a\circ y &=& \{(a x_1,\; a x_2,\; t_1)+(a y_1,\; a y_2,\; t_2):t_1,t_2\in\mathbb{Z}_3\} \\
&=& \{(a(x_1+y_1),\; a(x_2+y_2),\; t_1+t_2):t_1,t_2\in\mathbb{Z}_3\}.
\end{eqnarray*}
Since the set $\{t_1+t_2:t_1,t_2\in\mathbb{Z}_3\}$ equals all of $\mathbb{Z}_3$, we obtain equality   $a\circ(x+y)=a\circ x + a\circ y$.

\noindent 2) For $a,b\in\mathbb{Z}_3$ and $x\in V$, $(a+b)\circ x=\{((a+b)x_1,(a+b)x_2,t):t\in\mathbb{Z}_3\}$, while
\begin{eqnarray*}
a\circ x + b\circ x &=& \{(a x_1,a x_2,t_1)+(b x_1,b x_2,t_2):t_1,t_2\in\mathbb{Z}_3\} \\
&=& \{((a+b)x_1,(a+b)x_2,t_1+t_2):t_1,t_2\in\mathbb{Z}_3\},
\end{eqnarray*}
and again equality holds.

\noindent 3) For $a,b\in\mathbb{Z}_3$ and $x\in V$, $b\circ x=\{(b x_1,b x_2,t):t\in\mathbb{Z}_3\}$, and then
\begin{eqnarray*}
a\circ(b\circ x) &=& \bigcup_{u\in b\circ x} a\circ u =\{(a(bx_1),a(bx_2),s) :s\in \mathbb{Z}_3\} \\
&=& \{((ab)x_1,(ab)x_2,s):s\in\mathbb{Z}_3\}=(ab)\circ x.
\end{eqnarray*}

\noindent 4) For $a\in\mathbb{Z}_3$ and $x\in V$,
\[
a\circ(-x)=\{(a(-x_1),a(-x_2),t):t\in\mathbb{Z}_3\} = \{(-a x_1,-a x_2,t):t\in\mathbb{Z}_3\}.
\]
Similarly $(-a)\circ x=\{(-a x_1,-a x_2,t):t\in\mathbb{Z}_3\}$. Also,
\[
-(a\circ x)=\{-(a x_1,a x_2,t):t\in\mathbb{Z}_3\}=\{(-a x_1,-a x_2,-t):t\in\mathbb{Z}_3\},
\]
and $\{-t:t\in\mathbb{Z}_3\}=\mathbb{Z}_3$, so $-(a\circ x)=(-a)\circ x=a\circ(-x)$.

\noindent 5) For $x=(x_1,x_2,x_3)\in V$, $1\circ x=\{(x_1,x_2,t):t\in\mathbb{Z}_3\}$, and in particular $(x_1,x_2,x_3)\in 1\circ x$.

We now present a fuzzy subhyperspace $\mu$ on $V$. Define $\mu:V\to[0,1]$ by
\[
\mu(x_1,x_2,x_3)=
\begin{cases}
1, & x_2=0,\\
\frac{1}{2}, & x_2\neq 0.
\end{cases}
\]
We verify the conditions of Definition \ref{D FHVS}.

\noindent i) If $x_2=0$ and $y_2=0$, then $(x+y)_2=0$, so $\mu(x+y)=1\ge 1=\mu(x)\wedge\mu(y)$. If exactly one of $x_2,y_2$ is nonzero, then $\mu(x)\wedge\mu(y)=\tfrac{1}{2}$ and $\mu(x+y)\in\{1,\tfrac{1}{2}\}$, hence $\mu(x+y)\ge\tfrac{1}{2}$. If both $x_2,y_2\neq 0$, then $\mu(x)\wedge\mu(y)=\tfrac{1}{2}$; note that $(x+y)_2$ may be $0$ or nonzero, but in either case $\mu(x+y)\in\{1,\tfrac{1}{2}\}$ and so $\mu(x+y)\ge\tfrac{1}{2}$. Thus (i) holds.

\noindent ii) Since $x_2=0$ if and only if $-x_2=0$ in $\mathbb{Z}_3$, we have $\mu(-x)=\mu(x)$, for every $x\in V$.

\noindent iii) If $x_2=0$, then for every $a$ we have $a x_2=0$, hence every $t\in a\circ x$ satisfies second coordinate $0$ and so $\mu(t)=1$. Thus $\bigwedge_{t\in a\circ x}\mu(t)=1=\mu(x)$. If $x_2\neq0$, then $\mu(x)= \tfrac{1}{2}$. For $a=0$, we have $0\circ x=\{(0,0,t):t\in\mathbb{Z}_3\}$, so $\bigwedge_{t\in 0\circ x}\mu(t) =1\ge \tfrac{1}{2}$. For $a\neq0$, we have $a x_2\neq0$, hence every $t\in a\circ x$ has second coordinate nonzero and $\mu(t)=\tfrac{1}{2}$; therefore $\bigwedge_{t\in a\circ x}\mu(t)=\tfrac{1}{2}=\mu(x)$.

Therefore, $\mu$ is a fuzzy subhyperspace of $V$.

Now let $e_1=(1,0,0)$ and $e_2=(0,1,0)$. Define $f:[0,1]\longrightarrow P(V_P)$ by
\[
f(\beta)=
\begin{cases}
\{\underline{0}_{1},\; e_{1_{1}}\}, & \beta = 1,\\
\{e_{2_{1/2}}\}, & \beta=\tfrac{1}{2},\\
\varnothing, & \text{otherwise.}
\end{cases}
\]
It is clear that $\underline{0}_{\mu(\underline{0})}=\underline{0}_1 \in f(1)= f(\mu(\underline{0}))$. The fuzzy points appearing are $\underline{0}_1$, $e_{1_1}$ and $e_{2_{1/2}}$, with $\mu(\underline{0})=1$, $\mu(e_1)=1$ and $\mu(e_2)=\tfrac{1}{2}$. Thus if $x_\alpha\in f(\beta)$, then $\alpha=\mu(x)=\beta$, for each listed point. Also, the set $foot\big(\bigcup_{\beta\neq0} f(\beta)\big)\setminus\{\underline{0}\}=\{e_1,e_2\}$ is linearly independent in $V$; since if $c_1,c_2\in\mathbb{Z}_3$ and $\underline{0}\in c_1\circ e_1 + c_2\circ e_2$, then
\[
c_1\circ e_1=\{(c_1,0,t):t\in\mathbb{Z}_3\},\qquad
c_2\circ e_2=\{(0,c_2,t):t\in\mathbb{Z}_3\},
\]
and so any element of $c_1\circ e_1 + c_2\circ e_2$ has the form $(c_1,c_2,t_1+t_2)$, for some $t_1,t_2\in \mathbb{Z}_3$. Thus $c_1=c_2=0$.

\noindent Now suppose $x\in \sum_{i=1}^n a_i\circ x_i$, for $a_i\in\mathbb{Z}_3$ and $x_i\in foot\big( \bigcup_{\beta\neq0} f(\beta)\big)=\{e_1,e_2\}$. Then
\[x=\Big(\sum_i a_i (x_i)_1,\; \sum_i a_i (x_i)_2,\; \sum_i (t_i)\Big),\]
for some $t_i\in\mathbb{Z}_3$. We must show there exist $b_j\in\mathbb{Z}_3$ and $y_j\in foot\big(\bigcup_{\beta \ge\mu(x)} f(\beta)\big)$ with $x\in \sum_j b_j\circ y_j$. There are two cases:

\noindent If $\mu(x)=1$ (equivalently $x_2=0$), then the second coordinate of $x$ is $0$, thus \(\sum_i a_i (x_i)_2 = 0\). Hence all contributions to the second coordinate can be eliminated and $x$ can be written using only copies of $e_1=(1,0,0)$. Concretely, choose a single term with coefficient $b=x_1\in\mathbb{Z}_3$ and a third-coordinate shift so that $b\circ e_1$ supplies the required first and third coordinates; this is possible because $b\circ e_1=\{(b,0,t):t\in\mathbb{Z}_3\}$ and $t$ ranges over all $\mathbb{Z}_3$. Since $e_1\in foot(f(1))$ and $1= \mu(x)$, we have produced a representation of the required form with generators in $foot(\bigcup_{\beta\ge\mu(x)} f(\beta))=foot(f(1))=\{e_1\}$.

\noindent If $\mu(x)=\tfrac{1}{2}$ (equivalently $x_2\neq0$): then the union $\{\beta\ge\mu(x)\}$ includes both $\beta=1$ and $\beta=\tfrac{1}{2}$, so $foot\Big(\bigcup_{\beta\ge\mu(x)} f(\beta)\Big)=\{e_1,e_2\}$, and the original representation already uses generators from this set.

Therefore, $f$ is admissible for $\mu$.
\end{example}
Note that there are some fuzzy subhyperspaces on which no nontrivial admissible mappings can be defined. See the following:
\begin{example}\label{example fhvs Z3^2 2}
Let $V=(\mathbb{Z}_3\times \mathbb{Z}_3,+)$. Put $A'=\{(0,0),(1,0),(2,0)\}$. Define the external hyperoperation $\circ:\mathbb{Z}_3\times V\to P_*(V)$ by
\[
a\circ(x,y)=\{(ax,ay)+t:\ t\in A'\}.
\]
Then similar to the Example \ref{example fhvs Z3^2}, $V=(\mathbb{Z}_3\times \mathbb{Z}_3,+,\circ,\mathbb{Z}_3)$ is a hypervector space. Define a fuzzy subset $\mu$ on $V$ by
\[
\mu(x,y)=
\begin{cases}
1, & (x,y)\in A',\\
\tfrac{2}{5}, & (x,y)\in V\setminus A'.
\end{cases}
\]
One can verify that $\mu$ is a fuzzy subhyperspace of $V$.
\end{example}
\begin{proposition}\label{example no_admis_on_Z3^2}
There exists no admissible mapping $f:[0,1]\to P(V_P)$ for the fuzzy subhyperspaces in Examples \ref{example fhvs Z3^2} and \ref{example fhvs Z3^2 2}.
\end{proposition}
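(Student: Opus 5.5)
The plan is a direct case analysis on the set of nonzero feet of a putative admissible mapping $f$. Everything is driven by the defining feature of both hyperoperations: $a\circ v=av+H$, where $H=A$ in Example \ref{example fhvs Z3^2} and $H=A'$ in Example \ref{example fhvs Z3^2 2}. In both cases $H$ is a subgroup of order $3$ of $\mathbb{Z}_3^2$, so $V/H\cong\mathbb{Z}_3$. Fix such an $f$. By conditions (1) and (2) of Definition \ref{D admis} we get $\underline{0}_1\in f(1)$, since $\mu(\underline{0})=1$. Put $F=foot\bigl(\{x_\alpha: x\neq\underline{0},\ x_\alpha\in\bigcup_\beta f(\beta)\}\bigr)$.

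\textbf{Step 1: feet in $H$ are excluded.} I would first show $F\cap H=\varnothing$. If $x\in H\setminus\{\underline{0}\}$, then $\underline{0}\in x+H=1\circ x$, so $\{x\}$ is linearly dependent, contradicting condition (3).

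\textbf{Step 2: there is at most one nonzero foot.} Linear independence of $\{x,y\}\subseteq V\setminus H$ with $x\neq y$ would require that $c_1x+c_2y\notin H$ whenever $(c_1,c_2)\neq(0,0)$. This is impossible because $V/H$ is one-dimensional over $\mathbb{Z}_3$: choose $c$ with $y\equiv cx \pmod H$ and take $(c_1,c_2)=(c,-1)$. Hence $|F|\le 1$, and any nonzero foot $x$ lies outside $H$, so $\mu(x)<1$ (namely $\tfrac23$, respectively $\tfrac25$).

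\textbf{Step 3: use condition (4) to rule out the remaining configurations.} Two configurations survive: $F=\varnothing$ (the trivial mapping $f(1)=\{\underline{0}_1\}$), and $F=\{x\}$ with $x\notin H$, placed at level $\mu(x)$. In the second case the sums $\sum a_i\circ x$ together with $\sum a_i\circ\underline{0}$ cover all of $V$. For condition (4), vectors $u$ with $\mu(u)=1$ must then be represented using only feet of level $\ge 1$, that is, using $\underline{0}$ alone. The plan is to locate such a $u$ that is representable from $x$ but not from $\underline{0}$.

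\textbf{Main obstacle.} This last step is where the argument is genuinely in doubt. Since $\sum a_i\circ\underline{0}=H$, every $u$ with $\mu(u)=1$ already lies in $1\circ\underline{0}$. On a literal reading, condition (4) therefore seems satisfied both by the trivial mapping and by the one-foot mapping. So the contradiction must come from a stronger reading of Definition \ref{D admis}. One candidate is the ``nontrivial'' qualifier in the sentence preceding Example \ref{example fhvs Z3^2 2}, which would rule out $F=\varnothing$. Another is the requirement, implicit in the section's aims, that $\langle\bigcup_\beta f(\beta)\rangle=\mu$.

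What I would try first is to pin down the intended reading and then push Steps 1–2 through it. Steps 1–2 show that any admissible $f$ has at most one nonzero foot, and that this foot lies outside $H$. I would then check directly whether $\langle\,\underline{0}_1,\ x_{\mu(x)}\,\rangle$, computed via Theorem \ref{thm generated fuzzy subhyperspace}, can differ from $\mu$. If it cannot, the proposition as stated would need to be weakened to a statement about nontrivial or generating admissible mappings.
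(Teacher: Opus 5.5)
Your Steps 1 and 2 are correct and coincide with the paper's Steps 1 and 2. Your quotient argument via $V/H\cong\mathbb{Z}_3$ handles $A$ and $A'$ uniformly, which the paper's coordinate computation does not quite do.

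Your suspicion at Step 3 is also right, and it cannot be overcome, because the statement is false. In both examples $\mu(\underline{0})=1$. So Proposition~\ref{P 1} of this same paper already says that the trivial mapping $f(1)=\{\underline{0}_1\}$, $f(\beta)=\emptyset$ for $\beta<1$, is admissible, exactly as you observed.

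The paper's proof reaches its contradiction only in its final paragraph. There it asserts that condition (4) of Definition~\ref{D admis} forces at least two nonzero feet of level $1$, because ``the level-1 set is not generated by a single nonzero vector.'' That step is wrong. The level-$1$ set is $H=1\circ\underline{0}$ (indeed $a\circ\underline{0}=H$ for every $a$), so it is generated by the foot $\underline{0}$ that condition (1) already supplies. In any case, (4) is only a closure condition on vectors already spanned by the feet; it does not demand that every level-$1$ vector be spanned.

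Neither of your stronger readings rescues the claim, and the check you defer settles this. Fix $x\notin H$, e.g.\ $x=(1,0)$ for $A$ or $x=(0,1)$ for $A'$. Let $f(1)=\{\underline{0}_1\}$, $f(\mu(x))=\{x_{\mu(x)}\}$, and $f(\beta)=\emptyset$ otherwise.
\begin{itemize}
\item Conditions (1) and (2) are immediate.
\item Condition (3) holds: $\{x\}$ is independent, since $\underline{0}\in c\circ x=cx+H$ if and only if $cx\in H$, if and only if $c=0$.
\item Condition (4) holds: a vector of level $1$ lies in $H=1\circ\underline{0}$, while a vector of level $\mu(x)$ can keep its given representation.
\end{itemize}
So $f$ is a nontrivial admissible mapping. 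It is also maximal in $C_\mu$: by your Steps 1--2 no further nonzero foot can be added, and condition (2) forces every fuzzy point with foot $y$ to be $y_{\mu(y)}\in f(\mu(y))$.

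Moreover $\langle f\rangle=\mu$. Any fuzzy subhyperspace containing $\underline{0}_1$ and $x_{\mu(x)}$ is $\geq 1$ on $1\circ\underline{0}=H$ and $\geq\mu(x)$ on $(1\circ x)\cup(2\circ x)=V\setminus H$, while $\mu$ itself contains both points. This is just Theorem~\ref{T 2} at work.

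What your Steps 1--2 genuinely prove is the correct replacement for the proposition. The admissible mappings for these two fuzzy subhyperspaces are exactly the trivial one and the six one-foot mappings described above. In particular, none has a nonzero foot of level $1$ or more than one nonzero foot.
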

\begin{proof}
The proof is the same for $A$ and for $A'$, so we treat them in parallel and indicate necessary specializations.

Step 1. Any nonzero $w\in A$ (resp.\ $A'$) forces failure of condition (3) of Definition \ref{D admis}:

\noindent Take $w\in A$ (the $A'$ case is analogous). By construction $w$ is a ``diagonal'' vector when $A=\{(0,0),(1,1), (2,2)\}$ (so $w_1=w_2$), and a vector with second coordinate $0$ when $A'=\{(0,0),(1,0), (2,0)\}$ (so $w_2=0$). For every scalar $a\in\mathbb{Z}_3$, the set
\[
a\circ w=\{(a w_1 + k,\; a w_2 + k): k\in\mathbb{Z}_3\},
\]
contains the zero vector: choose \(k\equiv -a w_1\) (note \(k\in\mathbb{Z}_3\)); then \((a w_1 + k, a w_2 + k) = (0,0)\). Thus \(\underline{0}\in a\circ w\) for every \(a\in\mathbb{Z}_3\), including some \(a\neq 0\). But linear independence (Definition \ref{D lin indep}) requires that \(\underline{0}\in a\circ w\) imply \(a=0\). Hence any set of feet containing such a nonzero \(w\in A\) (resp.\ \(A'\)) cannot satisfy condition (3).

Step 2. Any two nonzero feet outside $A$ (resp.\ $A'$) are linearly dependent:

\noindent Let \(u=(u_1,u_2)\) and \(v=(v_1,v_2)\) be two elements of \(V\) with \(u,v\notin A\) (resp.\ \(\notin A'\)); equivalently for the $A$ case at least one of $u_1-u_2$ and $v_1-v_2$ is nonzero, and for the $A'$ case at least one of $u_2,v_2$ is nonzero. Consider the two linear equations (modulo 3) obtained from the condition $\underline{0}\in c_1\circ u + c_2\circ v$, with \(c_1,c_2\in\mathbb{Z}_3\). An arbitrary element of \(c_1\circ u + c_2\circ v\) has the form
\[
\big(c_1 u_1 + c_2 v_1 + k_1 + k_2,\; c_1 u_2 + c_2 v_2 + k_1 + k_2\big),
\]
for some \(k_1,k_2\in\mathbb{Z}_3\). Requiring this to equal \((0,0)\) yields the system
\[
\begin{cases}
c_1 u_1 + c_2 v_1 + s \equiv 0 \pmod 3,\\
c_1 u_2 + c_2 v_2 + s \equiv 0 \pmod 3,
\end{cases}
\]
where \(s=k_1+k_2\in\mathbb{Z}_3\). Subtracting the two congruences gives the single linear relation
\[
c_1(u_1-u_2) + c_2(v_1-v_2) \equiv 0 \pmod 3. \tag{$\ast$}
\]
Observe that for both choices of \(A\) the pair of coefficients \((u_1-u_2,\; v_1-v_2)\) is not \((0,0)\) because \(u,v\notin A\) (resp.\ \(\notin A'\)). Therefore, the homogeneous linear equation \((\ast)\) always admits a nontrivial solution: for example one can take $(c_1,c_2)=(v_1-v_2,\;-(u_1-u_2))$. Having chosen such nonzero \(c_1,c_2\) (not both zero), define \(s\equiv -(c_1 u_1 + c_2 v_1)\pmod 3\). Then the two congruences are satisfied simultaneously (with this \(s\)), and because \(s\in\mathbb{Z}_3\) we can write \(s=k_1+k_2\), for some \(k_1,k_2\in\mathbb{Z}_3\). Thus there exist \(t_1,t_2\in A\) (indeed any \(t_i\) with first coordinate \(k_i\) and second coordinate equal to that same \(k_i\) in the $A$ case, or with second coordinate \(0\) in the $A'$ case) such that
\[
\underline{0}=(c_1 u + t_1) + (c_2 v + t_2)\in c_1\circ u + c_2\circ v.
\]
Hence \(c_1,c_2\) not both zero produce \(\underline{0}\in c_1\circ u + c_2\circ v\), so \(\{u,v\}\) is linearly dependent.

Combining Steps 1 and 2, any admissible mapping \(f\) whose nonzero feet
\[
S=\mathrm{foot}\!\Big(\bigcup_{\beta\neq0} f(\beta)\Big)\setminus\{\underline{0}\},
\]
either contains an element of level 1 (a member of \(A\) or \(A'\)) or contains at least two elements must fail condition (3).

On the other hand, condition (4) requires that level-1 elements of \(V\) be representable by suitable combinations of feet of level \(\ge\!1\); this forces the presence of nonzero feet coming from the level-1 set when one wants to represent those level-1 vectors. Hence there is no admissible \(f\) that simultaneously (i) allows the required representations of all level-1 vectors (condition (4)) and (ii) has the nonzero feet linearly independent (condition (3)). Indeed, in both fuzzy subhyperspaces \(\mu_A\) and \(\mu_{A'}\), the level-1 set is not generated by a single nonzero vector, so condition (4) necessarily requires the presence of at least two nonzero feet at level \(1\). But any such choice forces these feet to be linearly dependent in \( \mathbb{Z}_3^2 \), violating condition (3). Thus the structural constraints imposed by \(\mu_A\) and \(\mu_{A'}\) make the existence of an admissible mapping impossible.
\end{proof}
\section{Generation of fuzzy subhyperspaces via admissible mappings}\label{Sec Fuzzy subhyperspaces generated by admissible mappings}
This section is devoted to the study of fuzzy subhyperspaces generated by admissible mappings. After introducing the notion of a fuzzy subhyperspace generated by a mapping, we investigate the relationship between admissibility, maximality, and generation. The main result of this section shows that every maximal admissible mapping generates the underlying fuzzy subhyperspace.
\begin{definition}\label{D 1}
Let $f:[0,1]\rightarrow P(X_P)$ be a mapping. The fuzzy subhyperspace generated by the fuzzy subset $\bigcup \{x_\alpha: x_\alpha\in\bigcup_{\beta\in[0,1]}f(\beta)\}$ is called the \emph{fuzzy subset generated by $f$} and is denoted by $\langle f\rangle$.
\end{definition}
\begin{example}\label{example fshs generated admis R3}
Consider the admissible mapping $f:[0,1]\rightarrow P(V_P)$ defined in Example \ref{example admis R3}, for the fuzzy subhyperspace $\mu$ on $V=\mathbb{R}^3$. Then $\bigcup_{\beta\in[0,1]} f(\beta) = \{\underline{0}_1, (1,0,1)_1\}$. Thus by Theorem \ref{thm generated fuzzy subhyperspace},
\begin{eqnarray*}
\langle f \rangle(x) &=& \langle \underline{0}_1\cup (1,0,1)_1 \rangle(x) \\
 &=& \bigvee_{n\in\mathbb{N}} \left(\bigwedge_{x\in\sum_{i=1}^{n}a_i\circ x_i} (\underline{0}_1(x_i)\vee (1,0,1)_1(x_i))\right).
\end{eqnarray*}
But $a\circ (1,0,1)=\{(a,0,t): t\in\mathbb{R}\}$ and $b\circ \underline{0}=\{(0,0,0)\}$. So any linear combination of these fuzzy points yields
\[
a \circ (1,0,1) + b \circ \underline{0} = \{(a,0,t) : t \in \mathbb{R}\}.
\]
Hence, for all vectors with $x_2=0$, there exists such a combination giving $x$, and the membership value is $\langle f \rangle(x_1,0,x_3) = 1$. For vectors with $x_2\neq 0$, no combination of the given fuzzy points produces them, so $\langle f \rangle(x_1,x_2,x_3) = 0$. Therefore, the fuzzy subset generated by $f$ is
\[
\langle f \rangle(x_1,x_2,x_3) =
\begin{cases}
1, & x_2=0,\\
0, & x_2 \neq 0.
\end{cases}
\]
\end{example}
\begin{example}\label{example fshs generated admis R3 2}
Consider the admissible mapping $f':[0,1]\to P(V_P)$ from Example \ref{example admis R3 2}, on $V=\mathbb{R}^3$. Then $\bigcup_{\beta\in[0,1]} f'(\beta)=\{\underline{0}_1,e_{1_1},e_{2_{1/3}}\}$. For any $x\in V$ the membership degree $\langle f'\rangle(x)$ is given by
\[
\langle f'\rangle(x)
=\bigvee\Big\{\bigwedge_{i=1}^n \alpha_i :
x\in a_1\circ x_1+\cdots+a_n\circ x_n,\ x_i\in\{\underline0,e_1,e_2\},\ \alpha_i=\mu(x_i)\Big\},
\]
where the values are $\mu(\underline0)=1$, $\mu(e_1)=1$, and $\mu(e_2)=\tfrac{1}{3}$. Observe that:
\[
a\circ e_1=\{(a,0,t):t\in\mathbb{R}\},\qquad
b\circ e_2=\{(0,b,t):t\in\mathbb{R}\},\qquad
c\circ\underline0=\{(0,0,0)\}.
\]
Hence, $a\circ e_1 + b\circ e_2 + c\circ\underline0 = \{(a,b,t):t\in\mathbb{R}\}$, and any combination involving only $e_1$ and $\underline0$ yields $\{(a,0,t):t\in\mathbb{R}\}$, while any combination involving only $e_2$ yields $\{(0,b,t):t\in\mathbb{R}\}$.

Now determine $\langle f'\rangle(x)$, for $x=(x_1,x_2,x_3)\in V$, by cases:

\noindent If $x_2=0$, then $x$ can be represented using only $e_1$ (and $\underline0$), e.g. $x \in x_1\circ e_1 + x_3 \circ \underline0$. All fuzzy points appearing in this representation have membership value $1$, so the associated minimum is $1$. No representation can yield a higher value; thus $\langle f'\rangle(x)=1$.

\noindent If $x_2\neq 0$, then $x$ cannot be produced by combinations that use only $\underline0$ and $e_1$, because those combinations have second coordinate $0$. Any representation of $x$ therefore must involve $e_2$ (and possibly $e_1$ and $\underline0$). For example one can write $x \in x_1\circ e_1 + x_2\circ e_2 + x_3\circ\underline0$. In any representation that includes $e_2$, the minimum of the participating fuzzy values is at most \(\tfrac{1}{3}\) (since $\mu(e_2)=\tfrac{1}{3}$). The displayed representation attains the value
\[
\min\{\mu(e_1),\mu(e_2),\mu(\underline0)\}=\min\{1,\tfrac{1}{3},1\}=\tfrac{1}{3}.
\]
No representation can produce a value larger than $\tfrac{1}{3}$, because every representation of a vector with nonzero second coordinate must involve $e_2$. Hence $\langle f'\rangle(x)=\tfrac{1}{3}$, for every $x$ with $x_2\neq 0$.

Therefore, the fuzzy subset generated by $f'$ is the two-level function
\[
\langle f'\rangle(x_1,x_2,x_3)=
\begin{cases}
1, & x_2=0,\\
\tfrac{1}{3}, & x_2\neq 0.
\end{cases}
\]
\end{example}
\begin{example}\label{example fshs generated admis Z3^3}
Consider the admissible mapping $f:[0,1]\to P(V_P)$ from Example \ref{example fhvs Z3^3}, on $V=\mathbb{Z}_3 \times \mathbb{Z}_3\times\mathbb{Z}_3$. Then $\bigcup_{\beta\in[0,1]} f(\beta)=\{\underline{0}_1,e_{1_1}, e_{2_{1/2}}\}$, and for each $x\in V$,
\[
\langle f\rangle(x)=\bigvee\Big\{\bigwedge_{i=1}^n \alpha_i :
x\in a_1\circ x_1+\cdots+a_n\circ x_n,\ x_i\in\{\underline0,e_1,e_2\},\ \alpha_i=\mu(x_i)\Big\},
\]
where $\mu(\underline0)=1$, $\mu(e_1)=1$, and $\mu(e_2)=\tfrac{1}{2}$. We have:
\[
a\circ e_1=\{(a,0,t):t\in\mathbb{Z}_3\},\qquad
b\circ e_2=\{(0,b,t):t\in\mathbb{Z}_3\},\qquad
c\circ\underline0=\{(0,0,0)\}.
\]
Thus for any $a,b,c\in\mathbb{Z}_3$, $a\circ e_1+b\circ e_2+c\circ\underline0=\{(a,b,t):t\in\mathbb{Z}_3\}$.

Now we determine $\langle f\rangle(x)$, for every $x=(x_1,x_2,x_3)\in V$, by cases:

\noindent If $x_2=0$, then $x$ can be represented using only $e_1$ and $\underline0$, for example $x \in x_1\circ e_1 + x_3\circ\underline0$. All fuzzy points appearing in this representation have membership value $1$, hence the associated minimum is $1$. No representation of $x$ can yield a value greater than $1$, so $\langle f\rangle(x) =1$.

\noindent If $x_2\neq 0$, then $x$ cannot be produced by combinations that use only $\underline0$ and $e_1$ (those always have second coordinate $0$). Thus any representation of $x$ must involve $e_2$ (and possibly $e_1$ and $\underline0$). For instance, $x \in x_1\circ e_1 + x_2\circ e_2 + x_3\circ\underline0$. In every representation that includes $e_2$ the minimum of the participating fuzzy values is at most $\tfrac{1}{2}$ (since $\mu(e_2)= \tfrac{1}{2}$). The displayed representation attains the value $\min\{\mu(e_1),\mu(e_2),\mu(\underline0)\}= \min\{1,\tfrac{1}{2},1\}=\tfrac{1}{2}$, and no representation can produce a larger value, because $e_2$ is unavoidable for achieving a nonzero second coordinate. Hence $\langle f\rangle(x)=\tfrac{1}{2}$, for every $x$ with $x_2\neq 0$.

Therefore, the fuzzy subset generated by $f$ is the two-level function
\[
\langle f\rangle(x_1,x_2,x_3)=
\begin{cases}
1, & x_2=0,\\
\tfrac{1}{2}, & x_2\neq 0.
\end{cases}
\]
\end{example}
\begin{proposition}\label{P 1}
Let $\mu$ be a fuzzy subhyperspace of $V$ such that $\mu(\underline{0})=1$. Then the mapping $f:[0,1]\rightarrow P(V_P)$ defined by $f(1)=\{\underline{0}_1\}$ and $f(\beta)=\emptyset$ for $\beta\in[0,1)$, is admissible for $\mu$.
\end{proposition}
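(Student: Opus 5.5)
The plan is to verify the four conditions of Definition \ref{D admis} one at a time for the given $f$. The only fuzzy point appearing anywhere in the image of $f$ is $\underline{0}_1$, so every condition reduces to a statement about the zero vector.

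Conditions (1) and (2) are immediate from the hypothesis $\mu(\underline{0})=1$. For (1), $f(\mu(\underline{0}))=f(1)=\{\underline{0}_1\}$, and $\underline{0}_{\mu(\underline{0})}=\underline{0}_1$. For (2), the only pair $(x_\alpha,\beta)$ with $x_\alpha\in f(\beta)$ is $x=\underline{0}$, $\alpha=1$, $\beta=1$, and indeed $\alpha=\mu(\underline{0})=1=\beta$.

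For (3), the set of fuzzy points $x_\alpha$ with $x\neq\underline{0}$ in $\bigcup_\beta f(\beta)$ is empty. Its foot is therefore $\emptyset$, and the empty set is vacuously linearly independent in the sense of Definition \ref{D lin indep}, since there is no finite collection of vectors to test.

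The main step is (4). Here $foot\bigl(\bigcup_{\beta\neq0}f(\beta)\bigr)=\{\underline{0}\}$, so the hypothesis is that $x\in\sum_{i=1}^n a_i\circ\underline{0}$ for some scalars $a_i$. I would not try to compute $a\circ\underline{0}$, because in a general hypervector space it need not be $\{\underline{0}\}$ (compare Example \ref{example fhvs Z3^2}, where $0\circ(0,0)=A$). Instead, I would note that the conclusion of (4) only asks for a representation with $y_i\in foot\bigl(\bigcup_{\beta\ge\mu(x)}f(\beta)\bigr)$. Since $\mu(x)\le 1$, the level $\beta=1$ always satisfies $\beta\ge\mu(x)$, so this foot contains $foot(f(1))=\{\underline{0}\}$. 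We can then take $m=n$, $b_i=a_i$ and $y_i=\underline{0}$, which reuses the given representation of $x$ verbatim.

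I expect this monotonicity observation to be the only point needing care, namely that the level-$1$ feet are admissible for every $x$. Once it is in place, all four conditions hold and $f$ is admissible for $\mu$.
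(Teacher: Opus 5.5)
Your proposal is correct and takes the same approach as the paper, which only says the verification is ``straightforward.'' Your condition-by-condition check is the natural way to fill that in. In particular, you note that level $1$ always satisfies $\beta\ge\mu(x)$, so condition (4) holds by reusing the given representation, with no need to compute $a\circ\underline{0}$.
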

\begin{proof}
It is straightforward.
\end{proof}
Suppose $C_\mu$ denotes the class of admissible mappings for a fuzzy subhyperspace $\mu$ of $V$ such that $\mu(\underline{0})=1$. By Proposition~\ref{P 1}, $C_\mu$ is nonempty.

A partial order $\leq$ on $C_\mu$ can be defined in the usual way: for $f,g\in C_\mu$, we write $f\leq g$ if and only if $f(\beta)\subseteq g(\beta)$ for all $\beta\in[0,1]$. Every chain in $C_\mu$ has an upper bound; therefore, by Zorn's Lemma, $C_\mu$ contains a maximal element. In \cite{Dehghan Ameri Exist Basis FSHS}, it has been shown that this maximality occurs exactly when every vector of $V$ can already be generated by the feet of those fuzzy points whose membership level is at least as large as the membership value of that vector.

The next theorem clarifies the role of maximal admissible mappings. It shows that maximality is precisely the condition needed to ensure that an admissible mapping generates the underlying fuzzy subhyperspace. This result provides the conceptual foundation for the algorithmic constructions.
\begin{theorem}\label{T 2}
Let $\mu$ be a fuzzy subhyperspace of $V$ and $f:[0,1]\rightarrow P(V_P)$ an admissible mapping for $\mu$ that is maximal in $C_\mu$. Then $\langle f\rangle=\mu$; that is, $f$ is a generator of $\mu$.
\end{theorem}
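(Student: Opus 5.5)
We prove $\langle f\rangle=\mu$ by establishing the two inequalities $\langle f\rangle\le\mu$ and $\mu\le\langle f\rangle$.

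\emph{Upper bound.} By condition (2) of Definition \ref{D admis}, every fuzzy point $x_\alpha$ in $\bigcup_\beta f(\beta)$ satisfies $\alpha=\mu(x)$. Hence the fuzzy subset $\bigcup\{x_\alpha : x_\alpha\in\bigcup_\beta f(\beta)\}$ is contained in $\mu$. Since $\mu$ is a fuzzy subhyperspace containing this fuzzy subset, and $\langle f\rangle$ is the intersection of all such fuzzy subhyperspaces, we get $\langle f\rangle\le\mu$. Alternatively, one can read this off Theorem \ref{thm generated fuzzy subhyperspace}. Whenever $x\in\sum a_i\circ x_i$ with every $x_{i_{\mu(x_i)}}$ in the image of $f$, conditions (1)--(3) of Definition \ref{D FHVS} give $\mu(x)\ge\bigwedge_i\mu(x_i)$. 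Taking the supremum over all such representations yields $\langle f\rangle(x)\le\mu(x)$.

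\emph{Lower bound.} Fix $x\in V$ and set $\gamma=\mu(x)$. If $\gamma=0$ there is nothing to prove, so assume $\gamma>0$. By Theorem \ref{thm generated fuzzy subhyperspace}, it suffices to find a representation $x\in\sum_{j=1}^m b_j\circ y_j$ in which each $y_j$ is a foot of a point in $\bigcup_{\beta\ge\gamma}f(\beta)$. Such a representation gives $\langle f\rangle(x)\ge\bigwedge_j\mu(y_j)\ge\gamma$. By condition (4) of Definition \ref{D admis}, it is enough to represent $x$ by feet from $\bigcup_{\beta\ne0}f(\beta)$ at all; condition (4) then lowers the generators into levels $\ge\mu(x)$. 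So the heart of the proof is the following claim: every $x$ with $\mu(x)>0$ lies in the span $\langle\mathrm{foot}(\bigcup_{\beta\neq0}f(\beta))\rangle$.

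\emph{Main step: maximality.} We argue by contradiction. Suppose some $x$ with $\mu(x)>0$ is not in the span $S$ of the current nonzero feet. Define $g$ by $g(\mu(x))=f(\mu(x))\cup\{x_{\mu(x)}\}$ and $g(\beta)=f(\beta)$ otherwise. We then check that $g$ is admissible, which contradicts the maximality of $f$ since $g$ strictly exceeds $f$.
\begin{itemize}
\item Conditions (1) and (2) are immediate.
\item Condition (3) requires that $\mathrm{foot}\cup\{x\}$ remain linearly independent. Suppose $\underline0\in c\circ x+\sum c_i\circ x_i$ with $c\ne0$. We multiply by $c^{-1}$ using axioms 1)--5) of Definition \ref{D HVS}, aiming to place $x$, or rather $-$ some element, in $S$. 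This needs care in a general hypervector space. If pure algebra does not suffice, we instead choose $x$ of maximal possible level among the unrepresented vectors, or use the characterization of maximality from \cite{Dehghan Ameri Exist Basis FSHS} cited just before the theorem. That characterization says maximality holds exactly when every vector is generated by feet of level at least its membership value. Invoking it directly would give the lower bound at once, and this is the route we would try first.
\item Condition (4) for $g$ must be verified for new representations that involve $x$. Here one would choose $x$ with $\mu(x)$ as large as possible among the unrepresented vectors, so that previously representable vectors keep their level-$\ge\mu$ representations. This is the step we expect to be the main obstacle.
\end{itemize}

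Combining the upper bound with the lower bound $\langle f\rangle(x)\ge\mu(x)$ for all $x$ gives $\langle f\rangle=\mu$.
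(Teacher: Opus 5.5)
Your upper bound, in its second form, is the paper's argument. For the lower bound, the paper does nothing beyond what you call the route you would try first: it quotes the footnoted characterization of maximality (Theorem 3.4 of Dehghan--Ameri) to write $x\in\sum a_i\circ x_i$ with all $x_i$ feet of level $\ge\mu(x)$, and reads off $\langle f\rangle(x)\ge\mu(x)$. The self-contained maximality argument that you present as the heart of the proof has a genuine gap at both places you flagged, and neither gap can be closed in the stated generality.

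\emph{Condition (3).} From $\underline{0}\in c\circ x+\sum c_i\circ x_i$ with $c\neq0$ you only get some $u\in c\circ x$ lying in $\langle S\rangle$. Passing to $x\in c^{-1}\circ u$ needs invertibility, since axiom 3) only gives $x\in c^{-1}\circ t$ for \emph{some} $t\in c\circ x$. This genuinely fails. Take $K=\mathbb{Z}_2$ and $V=\{\underline{0},e\}\cong\mathbb{Z}_2$, with $0\circ v=1\circ\underline{0}=\{\underline{0}\}$ and $1\circ e=\{\underline{0},e\}$; this satisfies 1)--5) of Definition \ref{D HVS}. For $\mu\equiv1$, the set $\{e\}$ is dependent because $\underline{0}\in1\circ e$. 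So the only admissible mapping is $f(1)=\{\underline{0}_1\}$, and it is therefore maximal. Yet $\underline{0}_1$ is itself a fuzzy subhyperspace, so $\langle f\rangle=\underline{0}_1\neq\mu$.

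\emph{Condition (4).} Choosing $x$ of maximal level among the unrepresented vectors presupposes that such a maximum exists. Without it the argument fails even for ordinary vector spaces. Let $V=\mathbb{R}^{\mathbb{N}}$ with $a\circ x=\{ax\}$. Put $\mu(\underline{0})=1$ and $\mu(x)=1-2^{-k}$ if the first nonzero coordinate of $x$ is the $k$-th. Let $f$ consist of $\underline{0}_1$ together with the unit vector $e_k$ at level $1-2^{-k}$ for every $k$. This $f$ is admissible: a finitely supported $x$ whose first nonzero coordinate is at index $k$ is a combination of the $e_i$ with $i\ge k$, all of level $\ge\mu(x)$. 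It is also maximal. By (3), any new point $y_{\mu(y)}$ must have $y$ infinitely supported. Then $y-\sum_{i\le N}y_ie_i$ has level at least $1-2^{-(N+1)}$, while its unique expansion over the independent feet contains $y$. So (4) forces $\mu(y)\ge 1-2^{-(N+1)}$ for every $N$, i.e.\ $y=\underline{0}$. Yet $\langle f\rangle$ vanishes on every infinitely supported vector, where $\mu\ge\frac12$.

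So the footnoted characterization, and with it Theorem \ref{T 2}, needs additional hypotheses that exclude these two phenomena. Citing it, as the paper does, does not rescue your sketch. What is missing is a hypothesis, not a cleverer argument.
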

\begin{proof}
By Definition~\ref{D admis}, for every fuzzy point $x_\alpha\in f(\beta)$, we have $\alpha=\mu(x)=\beta$. Then
\begin{equation*}
\left(\bigcup\limits_{x_{\alpha }\in \cup _{\beta \in \lbrack 0,1]}f(\beta)}x_{\alpha}\right)(t)=\left\{
\begin{array}{lc}
\mu (x)=\mu (t) & t=x, \\
0 & t\neq x,
\end{array}
\right.
\end{equation*}
for all $t\in V$. By Definition~\ref{D 1}, it follows that
\begin{eqnarray*}
\langle f\rangle(t)
&=&\bigvee_{n\in\mathbb{N}}\Bigl\{
\bigwedge_{i=1}^n
\Bigl(\bigcup_{x_\alpha\in \cup_{\beta\in [0,1]}f(\beta)}x_\alpha\Bigr)(x_i):
t\in \sum_{i=1}^n a_i\circ x_i,\ a_i\in K,\ x_i\in V
\Bigr\}.
\end{eqnarray*}
For each $t\in a_1\circ x_1+\cdots+a_n\circ x_n$, we have
\begin{eqnarray*}
\bigwedge_{i=1}^n
\Bigl(\bigcup_{x_\alpha\in \cup_{\beta\in [0,1]}f(\beta)}x_\alpha\Bigr)(x_i)
&\leq&
\mu(x_1)\wedge\cdots\wedge\mu(x_n)\\
&\leq&
\bigwedge_{z\in a_1\circ x_1+\cdots+a_n\circ x_n}\mu(z)\\
&\leq& \mu(t).
\end{eqnarray*}
Thus $\langle f\rangle(t)\leq\mu(t)$, and hence $\langle f\rangle\subseteq\mu$.

Conversely, for any $x\in V$, by \cite[Theorem 3.4]{Dehghan Ameri Exist Basis FSHS}\footnote{\cite[Theorem 3.4]{Dehghan Ameri Exist Basis FSHS}: Let $\mu$ be a fuzzy subhyperspace of $V$, and $f:[0,1]\rightarrow P(V_P)$ be admissible for $\mu$. Then $f$ is maximal in $C_\mu$ if and only if $x\in \left\langle foot\left(\bigcup\limits_{\beta \geq\mu (x)}f(\beta )\right)\right\rangle$, for all $x\in V$.}, we have $x\in \left\langle foot\left(\bigcup_{\beta \geq\mu (x)}f(\beta )\right)\right\rangle$. Then $x\in \sum_{i=1}^n a_i\circ x_i$, for some $a_i\in K$ and $x_{i}\in foot\left(\bigcup_{\beta \geq \mu (x)}f(\beta )\right) $, and so $\mu(x_{i})\geq \mu (x)$, $i=1,\ldots ,n$, by the admissibility of $f$. Thus
\[\mu (x)\geq \bigwedge\limits_{t\in a_{1}\circ x_{1}+\cdots +a_{n}\circ x_{n}}\mu (t)\geq \mu (x_{1})\wedge \cdots \wedge \mu (x_{n})\geq \mu (x),\]
and so $\mu(x)=\mu(x_{1})\wedge \cdots \wedge \mu(x_{n})$. Hence
\begin{eqnarray*}
\left\langle f\right\rangle (x) &\geq &\left( \bigcup\limits_{y_{\gamma}\in \cup _{\alpha \in [0,1]}f(\alpha)} y_{\gamma }\right)(x_{1})\wedge \cdots \wedge \left( \bigcup\limits_{y_{\gamma }\in \cup_{\alpha \in [0,1]} f(\alpha)}y_{\gamma }\right) (x_{n}) \\
&=&\mu (x_{1})\wedge \cdots \wedge \mu (x_{n}) \\
&=&\mu (x).
\end{eqnarray*}
Therefore, $\left\langle f\right\rangle \supseteq \mu $. Consequently, $\left\langle f\right\rangle =\mu$.
\end{proof}
As observed before Theorem~\ref{T 2}, the partially ordered set $C_\mu$ admits maximal elements. By Theorem~\ref{T 2}, any such maximal admissible mapping generates $\mu$.
\begin{remark}
Among the admissible mappings studied in the previous examples, only those that are maximal in $C_\mu$ satisfy the hypothesis of Theorem \ref{T 2}. In Example \ref{example admis R3 2}, the mapping $f'$ was shown to be maximal, and a direct computation established that $\langle f' \rangle=\mu$. Likewise, in Example \ref{example fhvs Z3^3}, the admissible mapping $f$ on $\mathbb{Z}_3^3$ was verified to be maximal, and we proved that $\langle f\rangle=\mu$. These two cases therefore provide concrete illustrations of Theorem \ref{T 2}. In contrast, the mapping considered in Example~\ref{example admis R3}, does not satisfy the assumptions of the theorem and therefore does not generate $\mu$.
\end{remark}
\section{Conclusion}\label{Sec Con}
In this paper, we investigated fuzzy subhyperspaces generated by admissible mappings. By introducing admissible mappings in terms of fuzzy points, we provided an algebraic mechanism for constructing fuzzy subhyperspaces while preserving linear independence and representability properties. We showed that maximal admissible mappings play a central role, since each such mapping generates the corresponding fuzzy subhyperspace.

Several examples over real and finite hypervector spaces were presented to illustrate the theory and to clarify the necessity of maximality in the generation process. The results establish admissible mappings as a natural algebraic tool for studying fuzzy subhyperspaces and preparing the ground for further developments. In particular, the framework developed here serves as a basis for future investigations on basis theory, admissible generators, and dimensional properties of fuzzy subhyperspaces.
\section*{Author Contribution.}
Omid Reza Dehghan and Reza Ameri contributed to the conception and design of the study. Both authors performed the theoretical analysis, contributed to the development of the results, and participated in writing and revising the manuscript. Both authors read and approved the final manuscript.
\section*{Ethical Statement.}
This article does not contain any studies with human participants or animals performed by any of the authors.
\section*{Conflict of Interest Statement.}
The authors declare that they have no conflict of interest.
\section*{Data Availability.}
No datasets were generated or analyzed during the current study.
\section*{Funding Statement.}
The authors received no specific funding for this work.

\end{document}